\newtheorem{theorem}{Theorem}
\newtheorem{lemma}[theorem]{Lemma}
\newtheorem{definition}[theorem]{Definition}
\newtheorem{proposition}[theorem]{Proposition}
\newtheorem{claim}{Claim}
\newcommand{\z}{\mathbb Z}
\newcommand{\X}{\mathcal{X}}
\newcommand{\Y}{\mathcal{Y}}
\begin{document}

\title{On the Removal Lemma for  Linear Systems over Abelian
Groups}

 \author{Daniel Kr\'al'}
 \address{Computer Science Institute,
         Faculty of Mathematics and Physics, Charles University.}
\email{ kral@kam.mff.cuni.cz}
\thanks{The work of this author, leading to this invention, has received founding from European Research Council under the European Union's Seventh Framework Programme (FP7/2007-2013)/ERC Grant Agreement no. 259385.}%

 \author{Oriol Serra}
 \address{Departament de Matem\`atica Aplicada IV,
        Universitat Polit\`ecnica de Catalunya.}
	 \email{oserra@ma4.upc.edu}
\thanks{Supported by the Catalan Research Council   under project 2008SGR0258 and the Spanish Research Council under project MTM2011-28800-C02-01.}%
 \author{Llu\'{i}s Vena}
 \address{
 Department of Mathematics, University of Toronto.}
 \email{lluis.vena@utoronto.ca}
\thanks{Supported by  a University of Toronto Graduate
Fellowship.}

\maketitle

\begin{abstract}
In this paper we present an extension of the removal lemma  to
integer linear systems over abelian groups. We prove that,  if   the
$k$--determinantal of an integer $(k\times m)$ matrix $A$ is coprime
with the order $n$ of a group $G$ and the number of solutions of the
system $Ax=b$ with $x_1\in X_1,\ldots, x_m\in X_m$ is
$o(n^{m-k})$, then we can eliminate $o(n)$ elements in each
set to remove all these solutions.
\end{abstract}


\keywords{
%
algebraic removal lemma, hypergraph removal lemma, systems of linear equations. 
}

\section{Introduction}

In 2005 Green~\cite{Green05} introduced the so-called Removal Lemma
for Groups. Roughly speaking, this result states that if a linear equation 
 $$ 
 x_1+x_2+\cdots +x_m=0
 $$ 
  has not many
solutions with variables taking values from given subsets $X_1,\ldots\linebreak[1] ,X_m$
of a finite Abelian group $G$, then one can delete all these
solutions by removing few elements in each subset.
This result is inspired by the removal lemma for triangles in graphs  (see
\cite{SzemRusz76}).

The Removal Lemma for Groups has been extended
 to one equation
with elements in non-necessarily Abelian groups by the authors
\cite{KralSerraVena09} and, by confirming a conjecture of
Green~\cite{Green05}, to linear systems over Finite Fields
independently by Shapira  \cite{Shap09-1} and the authors
\cite{KralSerraVena09-2}. Shapira \cite{Shap09-1}   asked for an extension of the result
to Abelian groups. This work attempts to answer this
question.

Recall that the $k$-th determinantal 
divisor $d_k(A)$ of an integer matrix $A$ is
the greatest common divisor of the determinants of all the $k\times k$ 
submatrices of
$A$ obtained by selecting $k$ not necessarily consecutive rows and columns. This notion appears in the description of the Smith Normal Form of integer matrices; see e.g. Newman \cite{new72}. For simplicity, we use the shorter term 
{\it $k$-th determinantal} instead of $k$-th determinantal divisor. 
Our  main result is the following:

\begin{theorem}\label{t.mainb} Let $m, k$ be positive integers with $m\ge k$. For any $\epsilon>0$ there exists  $\delta > 0$ which depends on $\epsilon$ and $m$ such that the following holds.

Let $A$ be a $k\times m$ integer matrix $A$ and let $G$ be an  Abelian group $G$ of order $n$ coprime with $d_k(A)$. For
every family of subsets $X_1,\ldots ,X_m$ of $G$ and for every
vector  $b\in G^k$, if the linear system $Ax=b$ has at most $\delta
n^{m-k}$ solutions with $x_1\in X_1,\ldots ,x_m\in X_m$ then there
are sets $X'_1\subset X_1,\ldots ,X'_m\subset X_m$ with $|X'_i|\leq
\epsilon n$, for all $i$, such that there is no solution of the
system with $x_1\in X_1\setminus X'_1,\ldots, x_m\in X_m\setminus
X'_m$.
\end{theorem}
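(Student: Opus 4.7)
The strategy is to extend the finite-field proof of the authors \cite{KralSerraVena09-2} to general abelian groups by means of the Smith Normal Form. By the Smith Normal Form there exist unimodular integer matrices $U \in \mathrm{GL}_k(\z)$ and $V \in \mathrm{GL}_m(\z)$ such that $UAV = [D \mid 0]$, where $D = \mathrm{diag}(d_1, \ldots, d_k)$ with $d_1 \mid d_2 \mid \cdots \mid d_k$ and $d_1 d_2 \cdots d_k = |d_k(A)|$. The coprimality assumption $\gcd(d_k(A), n) = 1$ forces each $d_i$ to be coprime with $n$, so multiplication by any $d_i$ is an automorphism of $G$.

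Setting $y := V^{-1} x$, the system $Ax = b$ becomes $[D \mid 0]\, y = U b$, which uniquely determines $y_i = d_i^{-1} (Ub)_i$ for $i \leq k$ and leaves $z := (y_{k+1}, \ldots, y_m) \in G^{m-k}$ as free variables. The constraint $x_i \in X_i$ becomes the affine constraint
\[
L_i(z) \;:=\; \sum_{j=1}^{m-k} V_{i, k+j}\, z_j \;\in\; X_i - \gamma_i \;=:\; X_i',
\]
where $\gamma_i \in G$ depends on $b$ and on the first $k$ columns of $V$. Thus the theorem reduces to the following: if at most $\delta n^{m-k}$ tuples $z \in G^{m-k}$ satisfy all $m$ constraints $L_i(z) \in X_i'$, then few elements can be removed from each $X_i'$ (equivalently, after translating by $\gamma_i$, from each $X_i$) to destroy every such~$z$.

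This reformulated problem is a system of $m$ linear-form constraints on $m-k$ free variables over $G$, which is exactly the type handled in \cite{KralSerraVena09-2} for finite fields. Following that approach, I would construct a multipartite hypergraph with one part per constraint whose hyperedges encode partial solutions and whose ``complete'' configurations correspond bijectively (up to a factor of $n$) to assignments $z$ satisfying every constraint. The few-solutions hypothesis gives few configurations, so the hypergraph removal lemma of Gowers / R\"odl--Skokan--Nagle--Schacht produces a small set of hyperedges whose deletion destroys all configurations; translating back through the Smith Normal Form substitution yields the required removal of $o(n)$ elements from each $X_i$.

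The principal technical difficulty is the adaptation of the hypergraph construction of \cite{KralSerraVena09-2} to this setting. In the finite-field case one can reduce, by row operations and column permutations alone, to the form $A = [I_k \mid B]$, so that $k$ of the variables become literally determined by the remaining free ones; over a general abelian group such a reduction can fail (the coprimality of $d_k(A)$ with $n$ does not imply the existence of a single $k \times k$ minor coprime with $n$), and one must work with the arbitrary linear forms $L_i$ produced by the Smith Normal Form substitution. The unimodularity of $V$ guarantees that the matrix $(V_{i, k+j})_{i,j}$ admits a left inverse over $\z$ (provided by the last $m-k$ rows of $V^{-1}$) and hence has full column rank modulo $n$; this non-degeneracy of the $L_i$ should be sufficient to make the hypergraph encoding faithful and to allow the counting and removal arguments to go through. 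Carefully verifying this passage from coordinate projections to general linear forms is the core of the proof.
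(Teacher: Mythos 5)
Your high-level plan shares the paper's skeleton: an algebraic reduction built on the Smith Normal Form followed by an application of the colored hypergraph removal lemma, and your reformulation of the problem as $m$ linear forms $L_i(z)\in X_i-\gamma_i$ in $m-k$ free variables over $G$ is a correct and natural normalization (the paper performs essentially this step in Lemmas~\ref{lem:ext-mat} and~\ref{lem:ext0}). Your observation that $\gcd(d_k(A),n)=1$ does not yield a single $k\times k$ minor coprime with $n$ is also correct and is precisely why the finite-field argument does not transfer verbatim.

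The gap is that you stop exactly where the real work begins, and the non-degeneracy condition you propose to rely on is too weak. You assert that the left-invertibility over $\z$ of the matrix $(V_{i,k+j})_{i,j}$ (full column rank mod $n$) ``should be sufficient to make the hypergraph encoding faithful.'' It is not. The hypergraph argument needs two things that full column rank does not provide: (i) each solution must generate $n^{k}$ \emph{edge-disjoint} copies of the pattern hypergraph, which requires that a solution be uniquely recoverable from \emph{every} window of $m-k$ consecutive coordinates in some circular ordering, i.e.\ that every such square submatrix be invertible modulo $n$ (the paper's condition S2); and (ii) each form must be expressible so that one coordinate is determined by the $k$ preceding ones with a unit coefficient, which is what makes the edge count per label exactly $n^{k}$ and the copy count exactly $n^{k}$ per solution. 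Arranging this over $\z/n\z$ cannot be done by row and column operations alone; the paper must \emph{enlarge} the system with auxiliary variables ranging over all of $G$ and interleave carefully constructed integer rows, choosing one coefficient via Dirichlet's theorem to be a prime exceeding $n$ so that all consecutive minors become coprime with $n$ (Lemmas~\ref{lem:ext1} and~\ref{lem:ext2}), and it must separately handle rows whose gcd exceeds $1$ by passing to subgroups $s\cdot G$ (Lemma~\ref{lem:ext0}) and dispose of ``thin'' systems. It also needs the strengthened form of the removal step (the last part of Lemma~\ref{lem:red}) guaranteeing that no elements are removed from the auxiliary coordinates, so that the removal pulls back to the original $X_i$ only. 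None of this is present in your proposal, and you acknowledge as much by deferring ``the core of the proof''; as written, the argument does not establish the theorem.
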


In the little `o' notation, Theorem~\ref{t.mainb} states that, if an
integer linear system over an Abelian group of order $n$, with $\gcd(n,d_k(A))=1$, 
has $o(n^{m-k})$ solutions, then we can
destroy all the solutions by removing $o(n)$ elements in each set.

Theorem \ref{t.mainb} is analogous to the statement for linear systems in finite fields proved in \cite{KralSerraVena09-2,Shap09-1} except that the condition on the $k$-determinantal is substituted there by the matrix $A$ having full rank.
The full rank condition can be easily removed from the hypothesis of \cite[Theorem~1]{KralSerraVena09-2} by a straightforward argument. The analogous condition that $d_k(A)=0$ in Theorem \ref{t.mainb} can  be similarly removed. However, the condition that $d_k(A)$ be coprime with $n$, being relatively natural,  cannot be easily removed. We refer the reader to the last section for a discussion on this issue.

A general framework for the study of this type of results  is
discussed by Szegedy~\cite{SzegSRL09}. The author proves a Symmetry-preserving
removal lemma  and applies it to give a diagonal version of the
Szemer\'edi Theorem on arithmetic progressions in Abelian groups.
Our work follows the direction of our original argument for the
nonabelian case presented in \cite{KralSerraVena09}, and it provides
a general answer for   linear systems $Ax=b$ which includes the
case of arithmetic progressions \cite[Theorem 3]{SzegSRL09}.

The proof of Theorem \ref{t.mainb} uses the removal lemma for
colored hypergraphs. An $r$-colored $k$-uniform hypergraph is a pair $(V,E)$ formed by a
set $V$ of vertices and a subset $E\subset {V \choose k}$ of edges,
which are $k$--subsets of vertices, and a map $c:E\rightarrow [1,r]$
which assigns `colors' to the edges. 

Given two colored $k$--uniform
hypergraphs $H$ and $K$, we say that $K$ contains a copy of $H$ if
there is an injective homomorphism from $H$ to $K$, that is, a map $f:V(H)\to V(K)$ whose natural
extension to edges preserves edges and colors. We also say that $K$
contains two disjoint copies of $H$ if there are two injective
homomorphisms $f, f'$ from $H$ to $K$ such that $f(E(H))\cap
f'(E(H))=\emptyset.$ The hypergraph $K$ is $H$--free if it contains
no copy of $H$. 

Extensions of the removal lemma to
hypergraphs have been obtained by several authors, see  Austin and Tao
\cite{AustinTAo09}, Elek and Szegedy \cite{ElekSzeg08}, Gowers
\cite{Gowers07}, Ishigami \cite{Ishigami} or  Nagle, R\"odl,
Schacht and Skokan \cite{Rodl06,rodsko06}. We shall use the following  version of the
hypergraph Removal Lemma, which follows, for instance, from
\cite[Theorem~1.5]{austao10}.

\begin{theorem}
\label{th:rmhc}
 For every positive integers $r$,   $m\ge k\ge 2$ and every $\epsilon >0$ there is   a $\delta>0$
 depending on $r$, $m$, $k$ and $\epsilon$  such that
the following holds.

Let $H$ and $K$  be  $r$-colored $k$-uniform hypergraphs with
    $m=|V(H)|$ and   $M=|V(K)|$ vertices respectively.
If the number of copies of $H$ in $K$ (preserving the colors of the
edges) is at most $\delta M^m$, then there is a set $E'\subseteq
E(K)$ of size at most $\epsilon M^{k}$ such that the hypergraph
$K'$ with edge set $E(K)\setminus E'$ is $H$--free.
\end{theorem}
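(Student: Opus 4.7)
The plan is to deduce Theorem~\ref{th:rmhc} from the standard uncolored hypergraph removal lemma by encoding edge colors as auxiliary vertices, raising the uniformity from $k$ to $k{+}1$.

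For each color $i\in[1,r]$, I introduce a fresh set $C_i$ of $M$ auxiliary ``color-$i$'' vertices and form the uncolored $(k{+}1)$-uniform hypergraph $\hat K$ on vertex set $V(K)\sqcup C_1\sqcup\cdots\sqcup C_r$ by declaring $e\cup\{c\}\in E(\hat K)$ for every color-$i$ edge $e\in E(K)$ and every $c\in C_i$. Analogously, I build $\hat H$ on $V(H)\sqcup\{u_1,\dots,u_r\}$ by setting $e\cup\{u_i\}\in E(\hat H)$ for every color-$i$ edge $e\in E(H)$. Call a copy of $\hat H$ in $\hat K$ \emph{aligned} if it sends each $u_i$ into $C_i$ and $V(H)$ into $V(K)$. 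Aligned copies are in $M^r$-to-$1$ correspondence with colored copies of $H$ in $K$: an aligned copy restricts to a colored copy, and each colored copy lifts to $M^r$ aligned copies via free choices of $\phi(u_i)\in C_i$.

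The key combinatorial step is showing that \emph{misaligned} copies (those sending some $u_i$ outside $C_i$ or some $v\in V(H)$ into $\bigsqcup C_j$) contribute only $o(M^{m+r})$. Since every edge of $\hat K$ contains exactly one vertex from $\bigsqcup C_j$, any misalignment at $u_i$ forces a rigid shift of the color coordinate along an edge of $\hat H$ incident to $u_i$, cutting a factor of $M$ off the naive count. If this rough bound proves insufficient, one can reinforce the construction by adjoining a rigid hyperedge gadget on the auxiliary vertices $\{u_1,\dots,u_r\}$ mirrored in $\hat K$ only when the partition is respected, which makes the correspondence exact.

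With $K$ having at most $\delta M^m$ colored copies of $H$, the hypergraph $\hat K$ then contains at most $\delta'|V(\hat K)|^{m+r}$ copies of $\hat H$ for $\delta$ small in terms of $\delta'=\delta'(\epsilon,m,k,r)$. Applying the uncolored hypergraph removal lemma with a suitably small $\epsilon'$ produces $\hat E'\subseteq E(\hat K)$ of size at most $\epsilon'|V(\hat K)|^{k+1}$ whose removal makes $\hat K$ $\hat H$-free. Define $E'\subseteq E(K)$ to consist of those edges $e$ whose lifts $\{e\cup\{c\}:c\in C_{\mathrm{col}(e)}\}$ meet $\hat E'$ in more than a $1/(2|E(H)|)$-fraction; averaging gives $|E'|\le\epsilon M^k$. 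If a colored copy of $H$ survived in $K\setminus E'$, a union bound over its $|E(H)|$ edges produces a choice $(c_1,\dots,c_r)\in C_1\times\cdots\times C_r$ whose lifted edges all avoid $\hat E'$, giving an aligned copy of $\hat H$ in $\hat K\setminus\hat E'$---a contradiction. The main obstacle is the rigorous handling of misaligned copies; the remainder is standard averaging and translation.
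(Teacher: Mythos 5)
There is a genuine gap at the first and crucial step of your reduction: the claim that misaligned copies of $\hat H$ in $\hat K$ contribute only $o(M^{m+r})$ is false, so the hypothesis of the uncolored removal lemma cannot be verified. Concretely, take $k=2$, $r=2$, let $H$ be the path on vertices $1,2,3$ with edge $\{1,2\}$ colored $1$ and edge $\{2,3\}$ colored $2$, and let $K$ be the complete graph on $M$ vertices with \emph{every} edge colored $1$. Then $K$ contains no colored copy of $H$. However, $\hat H$ has edges $\{1,2,u_1\}$ and $\{2,3,u_2\}$, and $\hat K$ has an edge $\{x,y,c\}$ for every pair $x,y\in V(K)$ and every $c\in C_1$; mapping $1,2,3$ to any three vertices of $V(K)$ and $u_1,u_2$ to any two distinct vertices of $C_1$ gives an injective homomorphism of $\hat H$ into $\hat K$. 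That is $\Theta(M^5)=\Theta(M^{m+r})$ copies of $\hat H$, a constant fraction of $|V(\hat K)|^{m+r}$, while the number of colored copies of $H$ in $K$ is zero. Your heuristic that a misalignment ``cuts a factor of $M$'' fails precisely for the misalignment in which $u_i$ lands in the \emph{wrong} class $C_j$: once the colors are erased, nothing in $\hat K$ distinguishes $C_1$ from $C_2$, so no degree of freedom is lost. (The final averaging/union-bound step of your argument is fine, but it never gets off the ground.)

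The fallback you sketch --- a ``rigid hyperedge gadget'' forcing the partition to be respected --- is not a proof and is not easily made into one: making the classes $C_1,\dots,C_r$ mutually distinguishable inside a single uncolored hypergraph, while simultaneously controlling copies of $\hat H$ that use gadget edges in unintended ways and the possibility that the removal lemma chooses to delete gadget edges, is essentially the same difficulty as the colored removal lemma itself. Note also that the paper does not prove Theorem~\ref{th:rmhc} at all; it quotes it as a consequence of Theorem~1.5 of Austin and Tao \cite{austao10}. The statements in the literature that yield it (Austin--Tao, Ishigami, and the partite forms of the R\"odl--Schacht/Gowers results) are obtained by running the hypergraph regularity and counting machinery directly on the partition of $E(K)$ into color classes, not by a black-box reduction to the monochromatic statement. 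If you want a self-contained derivation, that is the route to take; as written, your reduction does not establish the theorem.
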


The plan of the paper is as follows. In Section \ref{s.prf} we prove Theorem \ref{t.mainb} for a special class of matrices, which we call {\it standard $n$--circular}. The proof consists of associating to the system a pair of edge--colored hypergraphs in order to  transfer the statement to a setting in which the removal lemma for hypergraphs can be applied. Section \ref{s.prflem} introduces the notion of {\it restricted linear system} and provides the means to   transfer the result to general linear systems. The main result of the section is synthetized in Proposition \ref{prop:ext}. The proof of the main result is completed in Section \ref{sec:proof}. A closing section is devoted to discuss the condition on the $k$--determinantal in Theorem \ref{t.mainb}, which is not present in the analogous result for finite fields \cite[Theorem~1]{KralSerraVena09-2}.

\section{Standard $n$--Circular  Matrices} \label{s.prf}

In this section we prove Theorem~\ref{t.mainb} in the particular case of
homogeneous linear systems with what we call standard  $n$--circular 
matrices for Abelian groups of order $n$. We
show in Section~\ref{s.prflem} how the statement  extends to the
general case.

Throughout the paper   $A_i$ denotes the $i$--th row of a matrix $A$
and   $A^j$ its $j$--th column. Recall that a square integer matrix
is unimodular if it has determinant $\pm 1$. We also 
recall some standard facts on linear maps on abelian groups.
 Let $G$ be an abelian group of order $n$ and let $d$ be an integer coprime with $n$. Then the map $\phi_d:G\to G$ defined by multiplication, $g\mapsto d\cdot g$, is bijective and there is an integer $d'$ such that $\phi_d^{-1}(g)=d'\cdot g$. More generally, if $B$ is an  integer square matrix  of order $k$ with determinant $d$ coprime with $n$ then the linear map $\lambda_B:G^k\to G^k$, $g\mapsto Bg$, is also invertible and there is an integer matrix $B'$ such that $\lambda_B^{-1}(g)=B'g$.

\begin{definition}[Standard $n$--circular matrix]We say that a $(k\times m)$ integer matrix is 
{\it standard $n$--circular} if the
following properties hold:

\begin{itemize}
\item[{\rm (S1)}] $A=(I_k|B)$, where $I_k$ denotes the identity
matrix of order $k$.
\item[{\rm (S2)}] For each $j=1,\ldots ,m$,  the determinant formed by $k$ consecutive
columns in the circular order, $\{ A^{j+1}, A^{j+2},\ldots ,A^{j+k
}\}$ is coprime with $n$, where the superscripts are taken modulo $m$.
\end{itemize}
\end{definition}

We simply call matrices satisfying property S2 {\it $n$--circular}. 
Note that property S1 can always be imposed to an
$n$--circular  matrix by using elementary matrix
transformations  (with multiplication of rows only by  integers coprime to $n$). 

The next key Lemma  proves Theorem \ref{t.mainb} for
$n$--circular  matrices and abelain groups of order $n$ by constructing an hypergraph
associated to a given linear system. The approach is similar to the
one by Candela~\cite{can09} and by the authors
\cite{KralSerraVena09}.

\begin{lemma}\label{lem:red}
For each $\epsilon >0$ and positive integer $m$,  there is a $\delta>0$ depending only on $\epsilon$ and $m$ such that the following holds.

Let $A$ be a $k\times m$, $k\le m$, standard $n$--circular matrix and let $G$ be an abelian group of order $n$.
Let $X_1,\ldots ,X_n$ be a collection of subsets of $G$. 

If the number of solutions of the system $Ax=0$ with $x\in \prod_{i=1}^{m} X_i$ is at most
$\delta n^{m-k}$, then there are subsets $X_i'\subset X_i$ with
$|X_i'|< \epsilon n$ for all $i$ such that there is no solution of
the system $Ax=0$ with $x\in \prod_{i=1}^{m}\left(X_i\setminus
X_i'\right)$.

Moreover, if we have $X_j=G$, for  $j\in I$, where $I\subset
\{1,\ldots ,m\}$ has cardinality $|I|\le k$,
 then we can choose the sets $X_i'$ in such a way that
$X_j'=\emptyset$ for each $j\in I$. 
\end{lemma}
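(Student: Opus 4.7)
The strategy is to convert the problem into an instance of the hypergraph removal lemma (Theorem~\ref{th:rmhc}). Out of the data $(A,G,X_1,\ldots,X_m)$ I will build a $k$-uniform, $m$-partite, edge-colored hypergraph $K$ and a template $H$ on $m$ vertices so that color-preserving copies of $H$ in $K$ correspond bijectively to solutions $l\in L\cap\prod_{j=1}^{m}X_j$ of the restricted system, where $L=\ker A$ is the solution group. Applying Theorem~\ref{th:rmhc} then yields a small set $E'$ of edges whose deletion destroys every copy of $H$, and translating $E'$ back into the original setting gives the sets $X'_j$.

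The construction leverages the $n$-circular property of $A$. For $i\in\{1,\ldots,m\}$ set $T_i=\{i,i+1,\ldots,i+k-1\}\pmod m$ and $S_i=\{1,\ldots,m\}\setminus T_i$; by property (S2) the $k\times k$ submatrix $A_{T_i}$ is invertible modulo $n$, so the projection $\pi_{S_i}\colon L\to G^{S_i}$ is a group isomorphism and $|L|=n^{m-k}$. Take $V_j:=G$ for each $j\in\{1,\ldots,m\}$, so $M:=|V(K)|=mn$, and introduce $m$ edge colors. Declare a $k$-subset $\{(j,v_j):j\in T_i\}$ to be a color-$i$ edge of $K$ iff $v_j\in X_j$ for every $j\in T_i$ and the unique $l\in L$ with $l_{T_i}=(v_j)_{j\in T_i}$ satisfies $l_j\in X_j$ for every $j\in S_i$. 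Let $H$ be the $m$-vertex template with edges $\{u_j:j\in T_i\}$, one of each color. The $n$-circularity forces color-preserving copies of $H$ in $K$ to biject with restricted solutions: two edges of adjacent colors $i,i+1$ in a copy overlap in $k-1$ positions, so invertibility of $A_{T_i}$ and $A_{T_{i+1}}$ identifies their underlying elements of $L$, and iterating through the cyclic order pins down a single solution $l\in L$ with $l_j\in X_j$ for every $j$.

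With this bijection the hypothesis gives at most $\delta n^{m-k}$ copies of $H$ in $K$, which is at most $\delta_\mathrm{hyp} M^m$ for any prescribed $\delta_\mathrm{hyp}$ once $n$ exceeds a threshold depending only on $\epsilon$ and $m$ (for bounded $n$ the conclusion is immediate since one may remove a single representative per solution). Theorem~\ref{th:rmhc} applied with parameter $\epsilon_\mathrm{hyp}$ then yields $E'\subseteq E(K)$ with $|E'|\le\epsilon_\mathrm{hyp} M^k$. For each edge of $E'$ of color $i$ with witness $l\in L$, I assign it a coordinate $j\in T_i$ and add $l_j$ to $X'_j$; since deleting the vertex $(j,v_j)$ from $V_j$ annihilates at most $kn^{k-1}$ hyperedges, a greedy covering argument produces sets $X'_j\subseteq X_j$ killing every restricted solution and satisfying $|X'_j|\le\epsilon n$, provided $\epsilon_\mathrm{hyp}$ is chosen of order $\epsilon/m^{k-1}$.

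For the moreover clause with $X_j=G$ for $j\in I$ and $|I|\le k$, I choose the charged coordinate $j\in T_i\setminus I$ in each edge, which is possible whenever $T_i\not\subseteq I$ --- this fails only when $|I|=k$ and $I=T_{i_0}$ for some unique $i_0$. In that borderline configuration I drop the color-$i_0$ edge from $H$, and the remaining $m-1$ colors still force the bijection provided $k<m-1$; the residual case $k=m-1$, $|I|=m-1$ is handled directly, using that $|L|=n$ is small enough that the trivial one-element-per-solution removal already suffices. The principal technical hurdle is establishing the bijection between color-preserving $H$-copies and restricted solutions, a step that relies crucially on the invertibility of every cyclically consecutive $k\times k$ minor guaranteed by $n$-circularity; the moreover clause's tight case is the other subtlety, resolved by the template modification above.
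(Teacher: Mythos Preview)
Your construction has a gap right at the definition of the color-$i$ edges. You correctly note that $\pi_{S_i}\colon L\to G^{S_i}$ is an isomorphism (because $A_{T_i}$ is invertible modulo $n$), but you then invoke the projection onto $T_i$ instead, speaking of ``the unique $l\in L$ with $l_{T_i}=(v_j)_{j\in T_i}$''. Since $|L|=n^{m-k}$ while $|G^{T_i}|=n^{k}$, the map $\pi_{T_i}$ is in general neither injective nor surjective; concretely $\pi_{T_i}(l)=-A_{T_i}^{-1}A_{S_i}\,\pi_{S_i}(l)$, and when $m>2k$ the $k\times(m-k)$ matrix $A_{S_i}$ has more columns than rows, so $\pi_{T_i}$ has nontrivial kernel. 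Thus the edge set of $K$ is not well defined. Even if one patched the definition to ``some $l$ with $l_{T_i}=v_{T_i}$ lies in $\prod_j X_j$'', adjacent witnesses $l^{(i)},l^{(i+1)}$ in a putative copy of $H$ would only be forced to agree on the $k-1$ positions of $T_i\cap T_{i+1}$, which does not determine an element of the $n^{m-k}$-element group $L$; so a copy of $H$ need not correspond to a single solution, and the asserted bijection fails.

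There is a second, independent problem in the removal step. In your hypergraph each restricted solution produces exactly \emph{one} copy of $H$, so after Theorem~\ref{th:rmhc} hands you $|E'|\le\epsilon_{\mathrm{hyp}}(mn)^{k}$ edges, translating this into sets $X'_j$ of size at most $\epsilon n$ would require a \emph{lower} bound on how many edges of $E'$ a single removed element accounts for; your sentence instead invokes the \emph{upper} bound ``a vertex meets at most $kn^{k-1}$ hyperedges'', which points the wrong way and cannot yield $|X'_j|\le\epsilon n$ uniformly in $n$. The paper's proof avoids both issues by making the hypergraph $(k+1)$-uniform and letting the vertices carry an \emph{auxiliary} vector $g\in G^{m}$ rather than the solution itself: an edge of color $i$ on $\{(g_i,i),\dots,(g_{i+k},i+k)\}$ is given the label $x_i=\sum_j C_{i,j}g_j$ for an integer matrix $C$ built from the $n$-circular structure with $AC=0$, so that any copy of $H$ automatically yields $x=Cg\in L$, and each solution $x$ arises from exactly $n^{k}$ choices of $g$, giving $n^{k}$ \emph{edge-disjoint} copies. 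That amplification is precisely what makes the passage from $|E'|\le\epsilon'(mn)^{k+1}$ to $|X'_i|\le\epsilon n$ go through.
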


\begin{proof} We start by defining an integer $(m\times m)$ matrix
$C$ from which we will construct a pair of  colored hypergraphs $H$
and $K$. The purpose of this construction is to establish a
correspondence between solutions of the system $Ax=0$ with copies of
$H$ in $K$.

By property S2, the $j$--th column of $A$ can be written, for every
$j$, as an integer linear combination of the preceding $k$ columns
in the circular ordering:
\begin{displaymath}
    A^{j}=\sum_{i=j-k}^{j-1}C_{i,j} A^i,
\end{displaymath}
where the superscript $i$ is taken modulo $m$.

For $j=1,2,\ldots ,m$ we let  $C_{j,j}=-1$ and, if $i$ does not
belong to the circular interval $[j-k,j]$, then we set $C_{i,j}=0$ .
Thus,

\begin{equation}\label{eq:colj}
\sum_{i=1}^m C_{i,j}A^i=0,\; j=1,2,\ldots ,m.
\end{equation}

The integer $(m\times m)$ matrix $C=(C_{i,j})$ will be used to
define our hypergraph model for the given linear system.

Let $H$ be the following $(k+1)$-uniform colored  hypergraph. The vertex set of $H$ is
 $\{1,2,\ldots, m\}$. The edges of $H$ are the $m$
 $(k+1)$--subsets with consecutive elements in the circular ordering 
$$
\{ 1,\ldots,k+1 \}, \{2,\ldots,k+2\},\ldots, \{ m,1,\ldots,k\},
$$ 
(entries taken modulo $m$). The $i$-th edge $\{ i,i+1,\ldots,i+k\}$ is colored with color
$i$. Since $m\geq k+2$, $H$ contains $m$ different edges of mutually
different colors.

We next define the $(k+1)$-uniform colored  hypergraph $K$ as follows. Its vertex set is
$G\times [1,m]$. For each element $a_i\in X_i$, the $(k+1)$--subset
$$
\{(g_i,i),\ldots,(g_{i+k}, i+k)\}
$$ 
forms an edge labelled $a_i$ and colored with color $i$ if
\begin{equation}\label{eq:ai}
    a_i=\sum_{j=i}^{i+k}C_{i,j} g_j.
\end{equation}
Thus the edges of $K$ bear both, a color and a label. Observe that, for
each fixed $a_i\in X_i$, the system \eqref{eq:ai} has $n^{k}$
solutions in the $g_i$'s. Indeed, since $C_{i,i}= -1$, we can fix arbitrary values
$g_{i+1},\ldots ,g_{i+k}$ and get a value for $g_i$ satisfying the
equation. Therefore each element $a_i\in X_i$ gives rise to $n^k$
edges colored $i$ and labeled $a_i$. We also note that, by the construction of the matrix $C$, the equality \eqref{eq:ai} can be written as $a_i=\sum_{j=1}^m C_{i,j}g_j$.

 We next show that each solution to $Ax=0$
creates $n^{k}$ edge-disjoint copies of the hypergraph $H$ inside
$K$ and, also, that each copy of $H$ inside $K$ comes from a
solution of the system $Ax=0$.

\begin{claim} \label{claim1}
For any solution $x=(x_1,\ldots,x_{m})$ of the system
$Ax=0$ with $x_i\in X_i$, there are precisely $n^{k}$
edge--disjoint copies of the edge--colored hypergraph $H$ in the
hypergraph $K$ with edges labelled with $x_1,\ldots, x_m$.
\end{claim}

\begin{proof}
Fix a solution $x=(x_1,\ldots, x_{m})$ of $Ax=0$ with
$x_i\in X_i$, $1\le i\le m$.

Observe that, by property S2, $x$ is uniquely determined by any
of its subsequences $(x_i, x_{i+1},\ldots
, x_{i+m-k-1})$ of $m-k$  consecutive coordinates in the
circular ordering.

Recall that, by construction, each column $C^i$ of $C$ has zero 
entries in the rows $j\in [1,m]\setminus [i, i+1,\ldots ,i+k]$ (indexes modulo $m$ in $[1,m]$
here and in the sequel)  and its $i$--th entry is $-1$.

Therefore, for each choice of a vector $(g_{i+1},\ldots,\linebreak[1]g_{i+k})\in G^{k}$, 
there is a unique vector $ (g_{i+k+1}, \ldots ,g_{i-1},g_i)\in
G^{m-k}$ which satisfies the system $Cg=x$, where $x$ is the fixed solution and
 $g=(g_1,g_2,\ldots ,g_m)$. Indeed,  for each $t$, once
the values $(g_{i+1-t}, g_{i+2-t},\ldots, g_{i+k-t})$ have been
found, we can determine $g_{i-t}$ from the equation
\begin{equation}\label{eq:H}
x_{i-t}=\sum_{s=i-t}^{i+k-t} C_{i-t,s} g_{s},
\end{equation}
since $C_{i-t,i-t}= -1$. 

In this way, starting with the vector $
(g_{i+1}, \ldots ,g_{i+k-1},g_{i+k})\in G^{k}$ and the $m-k$ consecutive
elements $\{x_{i+k+1},\ldots,x_{i-1},x_{i}\}$ of $x$, we find a
unique $m$-dimensional vector $g=(g_1,\ldots,g_m)$ satisfying $Cg=x$. 

Moreover, if we let
$y =Cg\in G^m$, then $y$ satisfies $Ay =A(Cg)=(AC)g=0g=0$. Therefore
$y$ is a solution  of the system $Ax=0$ which shares $m-k$
consecutive values with the given solution $x$, hence
$y=x$. It follows that   the equations \eqref{eq:H} hold
for all $t$. Since    these are the  defining equations
\eqref{eq:ai} for the $k$--tuple $(g_i, i), \ldots ,(g_{i+k},i+k)$
to be an edge of $K$ colored $i$ and labeled $x_i$, we conclude that
each vector $(g_{i+1},\ldots,g_{i+k})\in G^{k}$ uniquely defines a
copy of $H$ in $K$.  Hence the solution $x$ induces $n^{k}$
copies of $H$ in $K$.

Let us show that these $n^k$ copies are edge disjoint. 
Recall that each entry $x_i\in X_i$ of $x$ gives rise to
$n^{k}$ edges labeled $x_i$ in the hypergraph $K$. On the other
hand, each of these edges belong to a unique copy of $H$ inside $K$
related to the solution $x$. Since this holds for each of the
edges and for each $x_i$, $1\leq i\leq m$, we conclude that the
$n^{k}$ copies of $H$ with edges labelled with
$x_1,\ldots,x_m$ are edge-disjoint.
\end{proof}

\begin{claim} \label{claim2} If $H'$ is a copy of $H$ in $K$, then
$x=(x_1,\ldots,x_{m})$ is a solution of the system, where $x_i$ is
the label of the edge colored by $i$ in $H'$.
\end{claim}

\begin{proof} The copy $H'$ has an edge of each color and is
supported over $m$ vertices. Indeed, since the edge colored $i$ contains a
vertex in $G\times \{ i\}$, then the copy $H'$ has one vertex on
each $G\times \{ i\}$, $1\leq i\leq m$. Hence the vertex set of $H'$
is of the form $\{ (g_1,1), (g_2,2), \ldots,(g_{m},m)\}$ for some
$g_1,\ldots ,g_{m}\in G$. If the edge $((g_i,i),\ldots
,(g_{i+k},i+k))$ colored $i$ in $H'$ has label $x_i$ then, by the
construction of $K$, we have $x_i=\sum_{s} C_{i,s}g_s$. Therefore,
it holds that $Cg=x$ where $g=(g_1, g_2, \ldots ,g_{m})$. Hence, as
all the columns in $C$ are in the kernel of $A$, we have
 $0=ACg=Ax$ and $x$ is a solution of the system.
\end{proof}

Claims \ref{claim1} and \ref{claim2} show that there is a correspondence
between the solutions of the system  $Ax=0$, with $x_i\in X_i$ for each $i$, and the copies of $H$
inside $K$. More precisely, each solution appears in the ordered labels of $n^k$ edge--disjoint copies of $H$ in $K$, and the labels of each copy of $H$ in $K$ form a solution. 


We now proceed with the proof of Lemma~\ref{lem:red}. Given
$\epsilon>0$ let $\delta>0$ be the value given by the Removal Lemma
of colored hypergraphs (Theorem~\ref{th:rmhc}) for the positive
integers $r=m, k'=k+1$ and $\epsilon'=\epsilon/m>0$.  

If the number of
solutions of the system $Ax=0$ is at most   $\delta n^{m-k}$, it
follows from Claims~\ref{claim1} and \ref{claim2}, that $K$ contains
at most $\delta n^{m}$ copies of $H$. By Theorem~\ref{th:rmhc}, there is a
set $E'$ of edges of $K$ with size   $\epsilon' n^{k+1}$ such that,
by deleting the edges in $E'$ from $K$, the resulting hypergraph is
$H$-free.

The subsets $X_i'\subset X_i$ of removed elements are constructed as
follows: if $E'$ contains at least $n^{k}/m$ edges colored with $i$
and labeled with $x_i$, we remove $x_i$ from $X_i$ (that is, $x_i\in
X_i'$.) In this way, the total number of elements removed from all
the sets $X_i$ together is at most $m\epsilon' n=\epsilon n$. Hence,
$|X_i'|\le \epsilon n$ as desired. Suppose that there is still a
solution $x=\left(x_1,x_2, \ldots,x_{m}\right)$ with $x_i\in
X_i\setminus X_i'$. Consider the $n^{k}$ edge--disjoint copies of
$H$ in $K$ corresponding to $x$. Since each of these $n^{k}$ copies
contains at least one edge from the set $E'$ and the copies are
edge--disjoint, $E'$ contains at least $n^{k}/m$ edges with the same
color $i$ and the same label $x_i$ for some $i$. However, such $x_i$
should have been removed from $X_i$, a contradiction.

It remains to show the last part of Lemma~\ref{lem:red}. Let $I$ be
a subset of $[1,m]$ with $|I|\leq k$, and suppose  that $X_j=G$ for
each $j\in I$. Let $H_0$ be the subgraph of $H$ formed by all the
edges in $H$ except the ones colored with $i\in I$. Note that $H$
contains a single copy of $H_0$. 

Since every vertex of $H$ belongs to
$(k+1)$ edges, the subgraph $H_0$ has no isolated vertices. It follows
that a copy $H_0'$ of $H_0$ in $K$ has precisely one vertex in $G\times
\{i\}$ for each $i=1,2,\ldots ,m$. By the construction of $K$, there
is at most one copy $H'$ of $H$ in $K$ containing $H_0'$, namely the
one whose labels are given by equation \eqref{eq:ai} given the
$g_i$'s. 

Since $X_j=G$ for each $j\in I$, then the label of each edge of $H'$ which is
missing in $H_0'$,  belongs to the
corresponding set $X_j$.  Hence such an edge is indeed present in $K$.
Hence, every copy of $H_0$ in $K$ can be uniquely extended to a copy
of $H$. We  conclude that $K$ contains as many copies of $H$ as of $H_0$. We can
apply Theorem~\ref{th:rmhc} to $H_0$ in the above argument to remove
all copies of $H_0$ by removing only elements from sets $X_i$ with
$i\in \{1,\dots ,m\}\setminus I$. This completes the proof.
\end{proof}

The condition $m\ge k+2$ in the hypothesis of Lemma \ref{lem:red}
has been used in the proof for the construction of the hypergraphs
associated to the linear system. However, this condition is not
restrictive for the proof of Theorem~\ref{t.mainb}; in the remaining cases
(when $m$ is $k$ or $k+1$),
we apply the following lemma:

\begin{lemma}\label{lem:k+2}
Let $A=(I_k|B)$ be a $(k\times m)$ integer matrix. If $m=\{k, k+1\}$
then the statement of  Theorem \ref{t.mainb} holds for $A$.
\end{lemma}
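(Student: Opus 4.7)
The coprimality hypothesis of Theorem~\ref{t.mainb} is automatic for matrices of the form $A=(I_k|B)$: the $k\times k$ submatrix formed by the first $k$ columns is $I_k$ with determinant $1$, so $d_k(A)=1$ and $\gcd(n,d_k(A))=1$ for every $n$. The plan is to treat the two cases $m=k$ and $m=k+1$ by direct inspection of the solution set, bypassing the hypergraph machinery of Lemma~\ref{lem:red} entirely; this is possible because $n^{m-k}$ happens to be a trivial a~priori upper bound on the number of solutions in these degenerate ranges.

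When $m=k$, the block $B$ is empty and $A=I_k$, so the system $Ax=b$ is equivalent to $x_i=b_i$ for each $i$ and hence has at most one solution with $x_i\in X_i$ for all~$i$. Choosing $\delta=1/2$ turns the hypothesis $\#\mathrm{solutions}\le\delta n^{m-k}=\delta<1$ into the statement that the system has no solution with $x\in\prod_i X_i$, and one may take $X_i'=\emptyset$ for every $i$.

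When $m=k+1$, write $B=(\beta_1,\ldots,\beta_k)^t$ for the unique column of $B$. Any solution of $Ax=b$ satisfies $x_i=b_i-\beta_i x_{k+1}$ for $i\le k$, so the solutions with $x\in\prod_i X_i$ are in bijection with the set $Y:=\{y\in X_{k+1}:b_i-\beta_i y\in X_i\text{ for every }i\le k\}$. I would set $\delta=\epsilon$, define $X_{k+1}':=Y$, and $X_i':=\emptyset$ for $i\le k$. Then the hypothesis gives $|X_{k+1}'|=|Y|\le\delta n=\epsilon n$, and every solution of $Ax=b$ in $\prod_i X_i$ is destroyed because each such solution satisfies $x_{k+1}\in Y=X_{k+1}'$.

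There is essentially no combinatorial obstacle here; the main point to watch is only that $d_k(A)=1$ so the coprimality condition causes no trouble, and that in both cases the total number of solutions is small enough (at most $1$ or at most $n$, respectively) that the removal can be absorbed into a single set $X_i'$. The role of this lemma is precisely to discharge the ranges $m\in\{k,k+1\}$ excluded by the assumption $m\ge k+2$ used in the hypergraph construction of Lemma~\ref{lem:red}.
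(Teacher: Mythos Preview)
Your proof is correct and follows essentially the same approach as the paper: for $m=k$ the system has at most one solution so any $\delta<1$ makes the hypothesis vacuous, and for $m=k+1$ one removes from $X_{k+1}$ precisely those elements that occur as the last coordinate of some solution, taking $\delta=\epsilon$. Your write-up is in fact slightly more explicit than the paper's, in particular the observation that $d_k(A)=1$ (so the coprimality hypothesis is automatic) and the concrete description of the bijection $x_{k+1}\mapsto(b_1-\beta_1 x_{k+1},\ldots,b_k-\beta_k x_{k+1},x_{k+1})$.
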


 \begin{proof}  For
$m=k$ the system has a unique solution and there is nothing to
prove. Suppose that $m=k+1$. Then, for each element $\alpha \in
X_{k+1}$ there is at most one solution to the system $Ax=b$ with
last coordinate $x_{k+1}=\alpha$. Let $X_{k+1}'$ be the set of
elements $\alpha\in X_{k+1}$ such that $x_{k+1}=\alpha$ is the last
coordinate of  some solution $x$. Since there are at most $\delta n$
solutions we have $|X_{k+1}'|\le \delta n$ and we are done by
removing the set $X_{k+1}'$. Thus the statement of Theorem
\ref{t.mainb} holds with $\delta =\epsilon$.
\end{proof}

\section{A reduction lemma}\label{s.prflem}

 In this section we prove some technical lemmas that will allow us
to derive Theorem~\ref{t.mainb}
from  Lemma~\ref{lem:red} via a series of transformations to the given linear
 system.

Recall that the adjugate of a square matrix  $L$, denoted by $\text{adj}(L)$,
is the matrix $C$ with $C_{i,j}=(-1)^{i+j} M_{j,i}(L)$, where $M_{j,i}(L)$
is the determinant of the matrix $L$ with the row $j$ and the column $i$ deleted.

Throughout the section $G$ denotes  an Abelian finite group of order
$n$. 

We start with some definitions which formalize our setting.

\begin{definition}[Restricted system]
	A {\rm restricted system} is a triple $\{ A,b, \X\}$ with
	\begin{itemize}
		\item $\X =X_1\times X_2\times \cdots \times X_m$, where $X_1,\ldots ,X_m$ are subsets of $G$. 
		\item $A$ is a $(k\times m)$ integer matrix such that its
$k$-th determinantal $d_k(A)$ satisfies $\gcd(d_k(A),|G|)=1$.
\item $b$ is an element of $G^k$, and we usually refer to it as the independent vector.
\end{itemize}
A { solution of the restricted system}
$\{ A,b, \X\}$ is a vector $x=(x_1,\dots ,x_m)\in G^m$ such that
$Ax=b$ and $x_i\in X_i$, $i=1,2,\ldots ,m$.
\end{definition} 

\begin{definition}[Extension of a restricted system]
	A restricted system $\{ A', b', \Y\}$ is an {\rm extension} of $\{ A,b, \X\}$ 	if the following  conditions hold:
	\begin{itemize}
	\item[{\rm E1:}] The dimensions $(k'\times m')$ of $A'$ and $(k\times m)$ of $A$ satisfy $k'\ge k$, $m'\ge m$, and $m'-k'=m-k$;
	\item[{\rm E2:}] There is a subset $I_0\subset [1,m']$ with cardinality $|I_0|=m$ such that
	$Y_i=G$ for each $i\in [1,m']\setminus I_0$; and
	\item[{\rm E3:}] There is a bijection $\sigma :I_0\rightarrow [1,m]$ and maps  $\phi_i
	:Y_i\rightarrow X_{\sigma (i)}$ such that the map 
	$$\phi: \Y
	\rightarrow \X$$
	defined as 
	\begin{equation}\label{def:phi}
	(\phi(y))_i=\phi_{\sigma^{-1}(i)}(y_{\sigma^{-1}(i)})
	\end{equation}
	 induces a
	bijection between the set of solutions of $\{A',b',\Y\}$ and the set
	of solutions of  $\{A,b,\X\}$.
	\end{itemize}
\end{definition}

Thus, an extension  $\{ A', b', \Y\}$  of $\{ A, b, \X\}$  has the
same number of solutions and one can define  a map $\phi$ with the following property. Denote by  
$$
\Y\setminus \Y'=\prod_{i=1}^{m'}Y_i\setminus Y_i'\; \mbox{ and } \X \setminus \phi (\Y')=\prod_{i=1}^{m}X_i\setminus \phi_{\sigma^ {-1}(i)}(Y_{\sigma^{-1}(i)}').
$$
Assume that $Y_j'=\emptyset$ when $j\not\in I_0$. Then, if the restricted system  $\{A', b',\Y\setminus \Y'\}$ has no solutions, then $\{ A, b, \X \setminus \phi (\Y')\}$   has no solutions either.

When $\{ A', b', \Y\}$ is an extension of $\{ A, b, \X\}$  with
$k=k'$,  any bijection for $\sigma$, and  the $\phi_i$'s are
bijective for each $i$, we say that the two systems are {\it equivalent}.

The purpose of this section is to  show that any restricted system
which fulfills the hypothesis of Theorem \ref{t.mainb} can be
extended to an homogeneous one with  a standard $n$--circular  matrix.
This will lead to a proof of Theorem \ref{t.mainb}   from Lemma
\ref{lem:red}.

The first easy step is to reduce the restricted system to an homogenous one.

\begin{lemma} If the restricted system $(A, b, \X)$ has a solution then it is equivalent to a restricted system $(A, 0,\X')$.
\end{lemma}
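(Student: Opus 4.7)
The plan is to apply the classical translation trick: pick any particular solution of $Ax=b$ and subtract it from each constraint set, turning the inhomogeneous system into a homogeneous one with the same matrix. Explicitly, by hypothesis there is some $x^{\ast}=(x_1^{\ast},\ldots,x_m^{\ast})\in\X$ with $Ax^{\ast}=b$; I would define the translated sets
\[
X_i' \;=\; \{\,z-x_i^{\ast} : z\in X_i\,\}\subset G,\qquad i=1,\ldots,m,
\]
and put $\X'=X_1'\times\cdots\times X_m'$. The claim is then that $(A,0,\X')$ is the desired system.

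To match the formal setup, I would verify that $(A,0,\X')$ is an extension of $(A,b,\X)$ with $k'=k$, $m'=m$, $I_0=[1,m]$, $\sigma$ the identity, and $\phi_i:X_i'\to X_i$ given by $\phi_i(y)=y+x_i^{\ast}$. Condition E1 is immediate (the dimensions agree), and E2 is vacuous since $[1,m']\setminus I_0$ is empty. Each $\phi_i$ is a bijection, because translation by $x_i^{\ast}$ is a bijection of $G$ carrying $X_i-x_i^{\ast}$ onto $X_i$; combined with $\sigma$ being a bijection, this places us in the ``equivalent'' case of the definition.

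For condition E3 the calculation is a one-liner: for any $y\in G^m$,
\[
Ay=0 \iff A(y+x^{\ast})=Ax^{\ast}=b,
\]
and $y_i\in X_i'$ if and only if $\phi_i(y_i)=y_i+x_i^{\ast}\in X_i$. Hence the induced map $\phi:\X'\to\X$ restricts to a bijection between the solutions of $(A,0,\X')$ and those of $(A,b,\X)$. Finally, the matrix $A$ is unchanged, so in particular $d_k(A)$ is unchanged and the coprimality hypothesis $\gcd(d_k(A),|G|)=1$ required of a restricted system is automatically preserved.

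There is no real obstacle here; the step is purely formal. The only point of mild care is to state the $\phi_i$ and $\sigma$ explicitly and to check all three conditions E1--E3 of the extension definition, so that what is obtained is an equivalence in the precise sense defined in the previous paragraphs, rather than a merely one-sided reduction.
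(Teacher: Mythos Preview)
Your proof is correct and follows exactly the same approach as the paper: pick a particular solution and translate each $X_i$ by the corresponding coordinate. The paper's proof is a two-line sketch of the same translation trick; you have simply been more explicit in verifying the formal conditions E1--E3 of the extension/equivalence definition, which is entirely appropriate.
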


\begin{proof}  Choose a solution $y=(y_1,\ldots ,y_m)$ of $Ax=b$ and replace $\X$ by $\X'=(X_1-y_1)\times \cdots \times (X_m-y_m)$, so that a solution of $Ax=b$ satisfies $x\in \X$ if and only if $x-y\in \X'$, $x-y$ being a solution of the homogeneous linear system. 
\end{proof}

We next show that the matrix $A$ can be enlarged to an integer
square matrix $M$ of order $m$ such that $\det (M)=d_k(A)$. The
following Lemma uses the ideas of Zhan~\cite{Zhan} and Fang
\cite{Fang} to extend partial integral matrices to unimodular ones.
We include the proof of the simpler version we need for our
purposes.

\begin{lemma}[Matrix extension]\label{lem:ext-mat}
Let $A$ be a $k\times m$ integer matrix, $m\geq k$. Let $d_k(A)$
denote the greatest common divisor of the determinants of the
$\binom{m}{k}$ square $k\times k$ submatrices of $A$.

There is an $m\times m$ integer matrix $M$ such that

{\rm (i)} $M$ contains $A$ in its  $k$ first rows, and

{\rm (ii)} $\det(M)=d_k(A)$.
\end{lemma}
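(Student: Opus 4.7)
The plan is to invoke the Smith Normal Form of $A$. Since $A$ is $k \times m$ with $m \geq k$, there exist unimodular matrices $U \in \mathrm{GL}_k(\z)$ and $V \in \mathrm{GL}_m(\z)$ such that $UAV = (D \mid 0)$, where $D = \mathrm{diag}(s_1,\ldots,s_k)$ is the diagonal matrix of invariant factors and the zero block has width $m-k$. A standard property recorded in Newman~\cite{new72} is that $d_k(A) = s_1 s_2 \cdots s_k = \det(D)$, so the target determinant is already packaged inside $D$.

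Next, I would transport an extension of $D$ back through $U$ and $V$. Define
\[
\widetilde{D} = \begin{pmatrix} D & 0 \\ 0 & I_{m-k} \end{pmatrix}
\qquad \text{and} \qquad
M = \begin{pmatrix} U^{-1} & 0 \\ 0 & I_{m-k} \end{pmatrix} \widetilde{D}\, V^{-1}.
\]
A direct block calculation shows that the first $k$ rows of $M$ equal $(U^{-1}D \mid 0)V^{-1} = U^{-1}(D \mid 0)V^{-1} = A$, which is (i). Since the outer two factors are unimodular and $\det(\widetilde D) = s_1 \cdots s_k$, we get $\det(M) = \pm d_k(A)$; if the sign comes out negative (and $m > k$), I would fix it by replacing one of the $1$s in the $I_{m-k}$ block by $-1$, which only touches rows below the first $k$ and therefore preserves (i).

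I do not anticipate any real obstacle: the whole argument is a one-line application of the Smith Normal Form followed by a block-matrix check. The only bookkeeping points are the sign convention for the gcd $d_k(A)$, which is defined only up to sign and absorbs the $\pm$ above, and the degenerate case $m = k$, where (i) forces $M = A$ and (ii) holds automatically under the convention $d_k(A) = \pm\det(A)$. The construction is cleaner than the full-fledged extension-to-unimodular results of Zhan and Fang cited just above the lemma, because here we are only asked to hit determinant $d_k(A)$ rather than determinant $\pm 1$.
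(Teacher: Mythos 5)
Your proof is correct and is essentially the same as the paper's: both pass to the Smith Normal Form $(D\mid 0)$, extend it to the block-diagonal $m\times m$ matrix with $I_{m-k}$ in the lower-right corner, and conjugate back by the (block-extended) unimodular factors so that the first $k$ rows reproduce $A$ while the determinant equals $\det(D)=d_k(A)$ up to sign. The only difference is cosmetic (you write $UAV=S$ where the paper writes $S=U^{-1}AV^{-1}$), and your explicit handling of the sign ambiguity is a point the paper glosses over.
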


\begin{proof} Let $S=U^{-1}AV^{-1}$ be the Smith Normal Form of $A$, where $U$
and  $V$ are unimodular matrices. We have $S=\left(D|0\right)$,
where $D$ is a $k\times k$ diagonal integer matrix with
$|\det(D)|=|d_{k}(A)|$ and $0$ is an all--zero $k\times (m-k)$
matrix.

Recall that $U$ and $V$ are the row and column operations
respectively which transform $A$ into $S$. Observe that the row
operations do not modify the value of the determinant of any
$(k\times k)$ square submatrix of $A$. The column operations may
modify individual determinants but do not change the value of
$d_{k}(A)$.

Let $\overline{S}$ be the matrix:
\begin{displaymath}
\overline{S}=
\begin{pmatrix}
D & 0 \\
0 & I_{m-k}\\
\end{pmatrix},
\end{displaymath}
where $I_{m-k}$ denotes the identity matrix of order $m-k$.  We have
$\det(\overline{S})=\det(D)=d_k(A)$.

Then, if we let $\overline{V}=V$ and
\begin{displaymath}
\overline{U}=
\begin{pmatrix}
U & 0 \\
0 & I_{s-r}\\
\end{pmatrix},
\end{displaymath}
we obtain the matrix 
$$
M=\overline{U}\; \overline{S} \;
\overline{V}
$$ 
which clearly (i) contains $A$ as a submatrix in its
first $k$ rows, and (ii)
$\det(M)=\det(\overline{S})=d_k(A)$, since $\overline{U}$
and $\overline{V}$ are still unimodular.
\end{proof}

We say that the restricted system $\{ A,b, \X\}$ is {\it thin} if
the set of solutions is a subset of  $X_1\times \cdots \times
X_{j-1} \times \{\gamma_j\} \times X_{j+1} \times \cdots \times
X_m$, for some $j$ and $\gamma_j\in X_j$. Note that the statement of
Theorem~\ref{t.mainb} is obvious if the system is thin since it
suffices to delete the element $\gamma_j$ to remove all solutions.
Thus there is no loss of generality in assuming that our restricted
system is not thin.

\begin{lemma}\label{lem:ext0} The restricted system $\{A,0,\X\}$ is either thin or it
has an extension $\{ A',0,\Y\}$ such that
\begin{itemize}
\item[{\rm (i)}] $k'=m$ and $m'=2m-k$;
\item[{\rm (ii)}] the matrix $A'$ has the form $A'=(I_{k'}|B)$;
\item[{\rm (iii)}] $\gcd (B_i)=1$, where $B_i$ denotes the $i$--row of the
submatrix $B$;
\item[{\rm (iv)}] for every $k'<j\leq m'$, the restricting set $Y_j$ is the whole group $G$.
\end{itemize}
\end{lemma}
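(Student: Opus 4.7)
If the system is thin, nothing needs to be shown---removing $\gamma_j$ from $X_j$ already destroys every solution---so we may assume it is not thin.

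The plan is to build the extension explicitly from an $m\times m$ completion of $A$, using its adjugate to parametrize the kernel of $A$ acting on $G^m$ by $m-k$ free parameters. First we invoke Lemma~\ref{lem:ext-mat} to obtain an $m\times m$ integer matrix $M$ whose first $k$ rows form $A$ and with $\det(M)=d_k(A)$; since $\gcd(d_k(A),n)=1$, multiplication by $M$ is a bijection of $G^m$. Let $N=\mathrm{adj}(M)$, so that $MN=NM=d_k(A)\,I_m$, and hence $N$ also acts bijectively on $G^m$. Split $N=(N_1\,|\,N_2)$ with $N_2$ the last $m-k$ columns. Reading the top $k$ rows of $MN=d_k(A)\,I_m$ gives $AN_2=0$, so each column of $N_2$ lies in $\ker A$ (the kernel over $G$); since $N$ is bijective on $G^m$ and $|\ker A|=n^{m-k}$, the map $z\mapsto N_2 z$ is a bijection $G^{m-k}\to \ker A$.

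Next we define the candidate extension $\{A',0,\Y\}$ by setting $A'=(I_m\,|\,B)$ with $B=-N_2$, $Y_i=X_i$ for $1\le i\le m$ and $Y_j=G$ for $m<j\le 2m-k$, together with $I_0=[1,m]$, $\sigma$ the identity, and $\phi_i=\mathrm{id}_{X_i}$. A tuple $y=(y_{\mathrm{old}},y_{\mathrm{new}})\in G^m\times G^{m-k}$ satisfies $A'y=0$ precisely when $y_{\mathrm{old}}=N_2 y_{\mathrm{new}}$; the bijection above then identifies solutions of $\{A',0,\Y\}$ with solutions of $\{A,0,\X\}$ via $y_{\mathrm{old}}\leftrightarrow x$. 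This yields (i), (ii) and (iv) directly.

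The remaining and most delicate property is (iii): the rows of the naive $B=-N_2$ need not have gcd $1$. We plan to repair this by combining unimodular column operations on $B$ (legitimate because the corresponding $Y_j$'s are all of $G$), permutations of the first $m$ coordinates (updating $\sigma$), and scalings of individual rows by integers coprime with $n$ (absorbed into $\phi_i$ as multiplication by a unit of $G$). We expect this to be the main obstacle: the non-thin hypothesis is required to rule out the pathological situation where the projection of $\ker A$ onto some coordinate lies in a proper subgroup of $G$, for no such adjustment could then achieve $\gcd(B_i)=1$ while preserving the bijection on solution sets. Once this is handled, properties (i)--(iv) all hold and the lemma follows.
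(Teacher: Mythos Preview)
Your construction of the extension via $N=\mathrm{adj}(M)$ is exactly the paper's: after multiplying $M'=(M\,|\,B')$ on the left by $\mathrm{adj}(M)$ and absorbing the scalar $d_k(A)$ into the sets, the paper's $B''$ is precisely your $N_2$ (up to sign). So through properties (i), (ii), (iv) your argument and the paper's coincide.

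The gap is in your treatment of (iii). The three moves you allow --- unimodular column operations on $B$, permutations of the first $m$ coordinates, and scaling a row by an integer coprime with $n$ --- can never \emph{decrease} the integer $\gcd(B_i)$: column operations preserve it, permutations merely relabel it, and scaling by $c$ multiplies it by $|c|$. So if some $\gcd(B_i)=s$ shares a prime factor with $n$, you are stuck. And this does occur: take $G=\z_4$ and $A=(1\ \ 2)$, so $d_1(A)=1$. Any unimodular completion has $N_2=\bigl(\begin{smallmatrix}-2\\ 1\end{smallmatrix}\bigr)$, and the first row of $B$ has $\gcd$ equal to $2$. The system $x_1+2x_2=0$ is not thin (its first coordinate ranges over $\{0,2\}$), so your claimed use of the non-thin hypothesis is wrong: non-thin rules out a \emph{zero} row of $B$ (projection onto a coordinate is a single point), not a row whose $\gcd$ is $>1$ (projection is a proper, nontrivial subgroup).

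The paper's remedy is the step you are missing, and it uses the freedom in condition~E3 that you did not exploit: the maps $\phi_i$ need only induce a bijection on \emph{solutions}, not on the sets $Y_i$ themselves. If $\gcd(B_i)=s>1$, the $i$-th coordinate of every solution lies in $s\cdot G$; the paper divides row $B_i$ by $s$, enlarges $Y_i$ to the full preimage $s^{-1}(X_i)\subset G$, and takes $\phi_i$ to be multiplication by $s$. This $\phi_i$ is not injective on $Y_i$, but the induced map on solutions is a bijection because $y_i$ is determined by the free variables $z$ via the (now divided) row. Repeating this for each row yields $\gcd(B_i)=1$ throughout.
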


\begin{proof} By using Lemma~\ref{lem:ext-mat} we extend the matrix $A$ into
an $m\times m$ square matrix $$M=\left(
                                  \begin{array}{c}
                                    A \\
                                    E \\
                                  \end{array}
                                \right)
$$ with determinant $\det
(M)=d_k(A)$. We complete the square matrix $M$ to the $m\times
(2m-k)$   matrix
\begin{displaymath}
M'=
\begin{pmatrix}
A & 0 \\
E & I_{m-k}\\
\end{pmatrix}=(M|B').
\end{displaymath}

We now consider the restricted  system $\{ M',0,\X'\}$ where
$$X'_i=\left\{
              \begin{array}{ll}
                X_i, & 1\le i\le m; \\
                G, & m+1\le i\le 2m-k.
              \end{array}
            \right.
$$
By letting $I_0=[1,m]$ and $\sigma$ and $\phi_i$ be the identity
maps we see that the function $\phi$ as defined in \eqref{def:phi} induces 
a bijection between the solutions of
$\{ M',0,\X'\}$ and the solutions of $\{ A,0,\X\}$. 
 Therefore $\{ M',0,\X'\}$ is an extension of the
original system.

Let $U={\rm adj}(M)$ denote the adjugate of $M$. Since $d=d_k(A)$ is
relatively prime with $n$,  the matrix $U$ is invertible and we get an equivalent restricted system
$\{ M'',0,\X'\}$ by setting
$$
M''=(U  M|UB')=(d\cdot I_m|UB').
$$
Let $d'$ be an integer such that $g=dh$ if and only if $d'g=h$ for each $h\in G$. 
By replacing each $X_i'$, for $i\in [1,m]$, by
$\X''_i=d'\cdot X_i'$ and $X''_i=X_i'$, for $i\in
[m+1,2m-k]$, we get a an equivalent system of the form $\{
(I_{m}|B''), 0,\X''\}$ where $B''=UB'$.  At this point we have 
an equivalent system which satisfies the conditions (i), (ii) and (iv) of the Lemma.

We observe that, if $B_j^{''}=0$ for some row $j$ of $B''$, then the $j$-th
equation implies $x_j=0$. Thus, the solution set of $\{ (I_{m}|B''),
0,\X''\}$ is inside $X_1^{''}\times \cdots \times X_{j-1}^{''}
\times \{0\} \times X_{j+1}^{''} \times \cdots \times X_{m'}^{''}$,
which implies that the solution set for the original system is
inside $X_1\times \cdots \times X_{j'-1} \times \{\gamma_{j'}\} \times
X_{j'+1} \times \cdots \times X_{m}$, for some $j'$ and some $\gamma_{j'}\in
X_{j'}$. Thus, if $B_j^{''}=0$, then the system is thin. Therefore
we can assume that all the rows in $B^{''}$ are non--zero.

Suppose that $\gcd(B_i'')=s>1$, where $B_i''$ denotes the $i$--th
row of $B''$. Then the $i$--th coordinate $y_i$, $i\in[1,m]$, of a
solution of $(I_m|B'')y=0$ belongs to the subgroup $s\cdot G$ of
$G$. Thus we may assume that $X_i''\subset  s\cdot G$. Let
$Y_i=s^{-1}(X''_i)$, where now $s^{-1}$ denotes the preimage of the
canonical projection $s:G\rightarrow s\cdot G$ defined by $s(g)=sg$,
and divide the entries of the $i$--row $B''_i$ by $s$. In this way
we obtain an extension of $\{ (I_{m}|B''), 0,\X''\}$ where the map
$\phi_i:Y_i\rightarrow X_i''$, $i\in[1,m]$, is the multiplication by
$s$. Notice that, even though $\phi_i$ is not a bijection, the map $\phi$ as defined in \eqref{def:phi} does induce a bijection between the set of solutions of $\{ (I_{m}|B''), 0,\X''\}$ and the ones of the new system, since  different solutions $y, y'$ of the new system can be distinguished by the value of $(0,-B''_i/s)\cdot y$. Moreover, if $X_i''=G$ then $Y_i=G$ as well.

By repeating the same procedure with each row of $B''$ we
eventually obtain an extension $\{ A',0,\Y\}$ satisfying the
conditions (i)--(iv) of the Lemma.  
This completes
the proof.
\end{proof}

For our last step we will use the following technical Lemma.

\begin{lemma}\label{lem:ext1} Let $n$ be a positive integer and let $M$ be an $r\times r$ integer matrix with determinant $d$ coprime with $n$. There are integer matrices $S$ and $T$ such that 
	\begin{displaymath}
		\overline{M}=\left(
		\begin{array}{c}
			I_r  \\
			S \\
			M \\
			T\\
			I_r\\
			\end{array}\right)
	\end{displaymath}
is a $s\times r$, $s=r(2r+1)$, integer matrix with the property that each $r\times r$ submatrix of $\overline{M}$ consisting of $r$ consecutive rows has a determinant coprime with $n$.
\end{lemma}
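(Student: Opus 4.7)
The plan is to construct $\overline{M}$ row by row, maintaining the invariant that every completed $r$-consecutive-row window has determinant coprime to $n$. The key technical fact I would use is the following: if row vectors $v_1,\ldots,v_{r-1}\in\mathbb{Z}^r$ can be extended to an $r\times r$ integer matrix with determinant coprime to $n$, then the greatest common divisor of the $(r-1)\times(r-1)$ minors of the matrix with rows $v_1,\ldots,v_{r-1}$ is coprime to $n$; this is because by cofactor expansion this $\gcd$ divides the determinant of any integer extension, which is coprime to $n$ by hypothesis. Consequently, given such a configuration of $r-1$ rows, one can choose an additional row $v\in\mathbb{Z}^r$ so that inserting $v$ in any prescribed position of the row-list yields an $r\times r$ matrix whose determinant is any preassigned value modulo $n$, in particular a unit.

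I would first fix the forced rows of $\overline{M}$: the top $r$ rows and the bottom $r$ rows are $e_1,\ldots,e_r$, and the $r$ rows in positions $r^2+1,\ldots,r^2+r$ are $m_1,\ldots,m_r$, so that with $|S|=|T|=r(r-1)$ the total number of rows equals $s=r(2r+1)$. Then, near each of the four interfaces between a forced block and a free block, I would fill in $r-1$ transitional rows by the key fact. For instance, at the $S$-$M$ interface I would choose $\rho_{r^2},\rho_{r^2-1},\ldots,\rho_{r^2-r+2}$ in decreasing order, each selected so that the sliding window it completes (containing it together with the first few rows of $M$) has coprime determinant: this works at the first step because $\det M$ is coprime to $n$, and inductively at each subsequent step because the previously chosen backward row supplies the needed coprime extension for the new row. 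The $M$-$T$, $T$-$I_r$, and $I_r$-$S$ interfaces are handled analogously.

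Finally, the remaining middle rows of $S$ and $T$ are filled in by straightforward extension, each new row chosen to preserve the coprime-determinant invariant for the window it completes. The main obstacle I foresee is the seam where forward and backward constructions inside $S$ (and symmetrically inside $T$) meet: the windows straddling the seam involve rows selected from both sides and must themselves have determinant coprime to $n$. Since $|S|=r(r-1)$ supplies far more rows than the $O(r)$ seam windows to coordinate, the available slack is ample; I would handle this either by an explicit coordination of the forward and backward choices near the seam, or alternatively by performing the whole construction of $S$ as a single backward pass starting from $M$, exploiting the degrees of freedom in the early backward choices to also satisfy the constraints required for compatibility with the top $I_r$ block.
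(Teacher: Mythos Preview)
Your greedy row-by-row approach is intuitive, and the ``key technical fact'' you isolate (the $(r-1)$-minor $\gcd$ being coprime to $n$ whenever an extension with coprime determinant exists) is correct and useful. However, the proof has a genuine gap precisely where you flag it: the seam. When forward and backward passes meet, there are $r-1$ windows whose $r-1$ ``context'' rows are a mixture of rows produced from the two sides, and nothing in your construction guarantees that the $(r-1)$-minors of such a mixed block have $\gcd$ coprime to $n$. Without that, you cannot even choose the last free row to make those straddling determinants units modulo $n$, let alone all of them simultaneously. Your two suggested fixes (``explicit coordination'' and ``single backward pass exploiting early degrees of freedom'') are not arguments; the entire content of the lemma is exactly this coordination, and a greedy local scheme does not deliver it. Concretely: doing the whole of $S$ backward from $M$ leaves the $r-1$ windows $e_j,\ldots,e_r,s_1,\ldots,s_{j-1}$ ($2\le j\le r$) completely uncontrolled, and saying there is ``ample slack'' is not a proof that they can be made coprime to $n$.

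The paper's proof avoids the seam altogether by a structural induction on $r$. Starting from $M$, one first appends a single row $T_1=\sum_i\lambda_i M_i$ with $\lambda_1$ a prime larger than $n$ (found via Dirichlet's theorem) and with first entry equal to $d'=\gcd(M^1)$; then rows $T_2,\ldots,T_r$ are appended so as to clear the first column below $T_1$, yielding a block-triangular shape $\begin{pmatrix} d' & *\\ 0 & M'\end{pmatrix}$ with $M'$ an $(r-1)\times(r-1)$ matrix of determinant coprime to $n$. All consecutive $r$-row windows so far have determinant $\pm d\lambda_1$, hence are coprime to $n$. One now applies the induction hypothesis to $M'$ to reach $I_{r-1}$, and lifts the resulting $(r-1)$-column matrix to an $r$-column matrix by inserting a zero first column and interleaving copies of the row $e_1$ at every $(r-1)$-st position. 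This interleaving is the device that replaces your seam: it manufactures the compatibility with $I_r$ structurally, rather than trying to hit it by steering a greedy process. If you want to salvage your approach, you would essentially need to force $s_1,\ldots,s_{r-1}$ to be $e_1,\ldots,e_{r-1}$ and then control the windows just below them, which unwinds into the same induction the paper uses.
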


\begin{proof}  For $t\ge r$ let us say that an $t\times r$ integer matrix $A$ is $n$--good if every submatrix of $A$ formed by $r$ consecutive rows has determinant coprime with $n$.

  We first show that there is a matrix $T$ such that
\begin{displaymath}
		N=\left(
		\begin{array}{c}
			M \\
			T\\
			I_r\\
			\end{array}\right)
	\end{displaymath}
is a $(r(r+1))\times r$ integer matrix which is $n$--good. 

We proceed by induction on $r$. For $r=1$ we can write $N=\left(\begin{array}{c} d\\ 1\end{array}\right)$. Let $r>1$. We construct  the matrix  $N$ by adding rows one by one to the bottom of $M$. We first observe that  the matrix formed by the rows 
\begin{displaymath}
		\left(
		\begin{array}{c}
			M_2 \\
			M_3\\
			\vdots\\
			M_r\\
			\sum_{i=1}^r \lambda_i M_i
			\end{array}\right)
	\end{displaymath}   
has determinant $d\lambda_1$. Let 
\begin{equation}\label{eq:d'}
\lambda_1M_{1,1}+\lambda_{2}M_{2,1}+\cdots +\lambda_r M_{r,1}=d'
\end{equation}
be an integer linear combination of  the entries in the first column of  $M$, where $d'$ is its greatest common divisor. Note that $d'$ divides $d$, hence it is also coprime with $n$. We can choose $\lambda_1$ to run on an arithmetic  progression $a+b\z$ with $\gcd(a,b)=1$, by keeping the identity \eqref{eq:d'} with appropriate values of $\lambda_2,\ldots ,\lambda_r$. Thus, by Dirichlet theorem, we may choose $\lambda_1$ to be some prime larger than $n$. We define the first row of $T$ to be 
$$
T_1= \sum_{i=1}^r \lambda_i M_i
$$
for the above choice of $\lambda_1, \lambda_2,\cdots ,\lambda_r$. In this way, the matrix
$$
\left( \begin{array}{c} M\\ T_1\end{array}\right)
$$
is $n$--good. Moreover, $T_{11}=\gcd (M^1)=d'$.

We next proceed to add the next $r-1$ rows. For   $i=2,\ldots,r$, we define  $T_i=M_i-(M_{i,1}/d') T_1$. By the remark at the beginning of the proof, the matrix
$$
\left( \begin{array}{c} M\\ T_1\\T_2 \\ \vdots \\ T_r \end{array}\right)= \left( \begin{array}{c} M\\ T' \end{array}\right)
$$
is $n$--good,  where
 $$
 T'=\left( \begin{array}{cc}  d' & \ast\\0&M'\end{array}\right),
 $$
for some integer square matrix $M'$ of order $r-1$ which has determinant coprime with $n$.

By induction hypothesis there is $T''$ such that the $r(r-1)\times r$ integer matrix
\begin{displaymath}
		N'=\left(
		\begin{array}{c}
			M' \\
			T''\\
			I_{r-1}\\
			\end{array}\right)
	\end{displaymath}
is $n$--good. Add to $N'$ a first column of zeros and insert in the resulting matrix  the row $(1,0,\ldots ,0)$ of length $r$ between the  positions $j(r-1)$ and $j(r-1)+1$ for $j=1,\ldots, r-1$. Moreover, insert the row $T_1=(d' \; \ast)$ as the first row of $N'$. The resulting matrix $N''$ has $r\left(r-1\right)+ r=r^2$ rows and, by construction,
\begin{displaymath}
		N=\left(
		\begin{array}{c}
			M \\
			N''
			\end{array}\right)=\left(\begin{array}{c}
				M \\
				T\\
				I_r
				\end{array}\right)
	\end{displaymath}
is $n$--good, has $r(r+1)$ rows and $r$ columns and has the desired form for some matrix $T$.

By the same argument adding rows to the top of $M$ we see that there is also a matrix $S$ which, combined with $T$, gives the result.	
\end{proof}

Our final step is to show that, if the restricted system $\{
A,0,\X\}$, where $A$ satisfies the conclusions of Lemma
\ref{lem:ext0}, is non--thin, then it admits an extension with a standard
$n$--circular matrix.

\begin{lemma}\label{lem:ext2} Let $\{ A,0,\X\}$ be a  non--thin
 restricted system where $A=(I_k|B)$ and $\gcd(B_i)=1$ for every row $i$.
 There is an extension
$\{ A',0,\X'\}$ with the following properties.
\begin{itemize}
\item[{\rm (i)}] $A'$ is a standard $n$--circular matrix;
\item[{\rm (ii)}] the dimensions of $A'$ depend only on the dimensions of $A$; and
\item[{\rm (iii)}] up to a reordering
of the subscripts, $\X'=\X\times \prod_{j=m+1}^{k'+m-k} G $.
\end{itemize}
\end{lemma}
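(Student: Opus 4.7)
Set $q = m - k$. I build $A'$ in the standard form $(I_{k'}\mid B')$ by stacking tall matrices, one per row of $B$, each produced by Lemma~\ref{lem:ext1}. The $k$ rows of $A'$ where the rows of $B$ are embedded inside $B'$ will correspond to the old restricted variables $x_1,\ldots,x_k$; the remaining $k'-k$ such rows correspond to new unrestricted variables, and the last $q$ columns of $A'$ correspond to $x_{k+1},\ldots,x_m$.

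For each row $B_i$ of $B$, the hypothesis $\gcd(B_i)=1$ ensures that $B_i\bmod p$ is a nonzero vector in $\F_p^q$ for every prime $p\mid n$, and so it can be extended to a basis of $\F_p^q$. By the Chinese Remainder Theorem there exist integer rows $w_1^{(i)},\ldots,w_{q-1}^{(i)}$ such that the $q\times q$ integer matrix $M_i$ having $w_1^{(i)},\ldots,w_{q-1}^{(i)}$ as its first $q-1$ rows and $B_i$ as its last row has $\det(M_i)$ coprime with $n$. Applying Lemma~\ref{lem:ext1} to each $M_i$ produces a $q(2q+1)\times q$ integer matrix $\overline{M_i}$ of the form $(I_q;\,S_i;\,M_i;\,T_i;\,I_q)$ in which every $q$ consecutive rows have determinant coprime with $n$. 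Stacking these vertically yields
\[
B'=\begin{pmatrix}\overline{M_1}\\\overline{M_2}\\\vdots\\\overline{M_k}\end{pmatrix},
\]
a $k'\times q$ matrix with $k'=kq(2q+1)$, and I set $A'=(I_{k'}\mid B')$. The dimensions $k'$ and $m'=k'+q$ of $A'$ depend only on $k$ and $m$, verifying (ii).

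To check (i), fix a starting column $s\in[1,m']$ and consider the $k'\times k'$ determinant of the $k'$ columns of $A'$ beginning at position $s$ in circular order. Expanding along the columns of $I_{k'}$ contained in the window reduces this determinant, up to sign, to one of three types of minor of $B'$: a leading principal minor of size $s-1\le q$ (when $s\in[1,q+1]$); a $q\times q$ submatrix consisting of $q$ consecutive rows of $B'$ with all $q$ columns (when $s\in[q+2,k']$); or a trailing anti-principal minor of size $m'-s+1\le q$ (when $s\in[k'+1,m']$). The first and third types are determinants of submatrices of an $I_q$ block, because the top and bottom $q$ rows of $B'$ are the $I_q$ blocks from $\overline{M_1}$ and $\overline{M_k}$, and are therefore $\pm 1$. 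The middle type either lies entirely inside some $\overline{M_i}$ -- where its determinant is coprime with $n$ by Lemma~\ref{lem:ext1} -- or straddles the interface between consecutive $\overline{M_i},\overline{M_{i+1}}$; in the latter situation the window falls within the $2q$-row buffer formed by two adjacent $I_q$ blocks, so the $q$ selected rows form a permutation of $I_q$.

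To finish, identify the column of $I_{k'}$ in $A'$ supporting the row of $B_i$ in $B'$ with the old variable $x_i$ (for $i=1,\ldots,k$), identify the last $q$ columns of $A'$ with $x_{k+1},\ldots,x_m$, and take the remaining $k'-k$ columns of $A'$ as new unrestricted variables. The rows of $A'$ carrying the $B_i$'s reproduce the original equations of $Ax=0$ on $\X$, while each other row uniquely determines its associated new variable from $x_{k+1},\ldots,x_m$; this gives a bijection between the two solution sets and, after reordering subscripts, the product structure $\X'=\X\times\prod_{j=m+1}^{k'+m-k}G$ of (iii). The main obstacle is the circular-window determinant analysis that reduces each window to either a trivial $I_q$ minor or a minor controlled by Lemma~\ref{lem:ext1}; this is exactly why each $M_i$ must be bordered by $I_q$ blocks rather than relying on a single application of Lemma~\ref{lem:ext1}.
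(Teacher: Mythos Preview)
Your proof is correct and follows essentially the same route as the paper: build $B'$ by stacking, for each row $B_i$, a block produced by Lemma~\ref{lem:ext1}, then set $A'=(I_{k'}\mid B')$ and verify the circular determinant condition by a case split on the starting column. The only substantive difference is that the paper obtains the square matrix containing $B_i$ by invoking Lemma~\ref{lem:ext-mat} (yielding determinant exactly $1$), whereas you use a direct CRT argument to get determinant coprime with $n$; since Lemma~\ref{lem:ext1} only requires the latter, both are fine. Your explicit handling of the interface between consecutive $\overline{M_i}$ blocks (the $2q$-row $I_q$ buffer giving a permutation matrix) is a detail the paper leaves implicit in its displayed form of $B'$.
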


\begin{proof} We apply Lemma \ref{lem:ext1} to  the matrix $B$ in the following
manner.  As each row $B_i$ of the submatrix $B$ is such that
$\gcd(B_i)=1$, we can apply Lemma~\ref{lem:ext-mat} to the row $B_i$, by
to obtain a $(m-k)\times (m-k)$ square matrix $\overline{B_i}$ with
determinant $ 1$. Thus, by applying Lemma \ref{lem:ext1} to each
of the resulting matrices $\overline{B_1},\ldots ,\overline{B_k}$ we
may construct the following $k'\times (m-k)$ rectangular  matrix:
\begin{displaymath}
B'=
\begin{pmatrix}
I_{m-k} \\
S_1 \\
\overline{B_1} \\
T_1 \\
I_{m-k} \\
S_2 \\
\overline{B_2}\\
T_2 \\
I_{m-k} \\
\cdots \\
I_{m-k} \\
S_k \\
\overline{B_k} \\
T_k \\
I_{m-k}
\end{pmatrix},
\end{displaymath}
for some   $k' $ depending on the dimensions of $B$. Let
$$ 
A'=(I_{k'}|B').
$$

Observe that every set of $k'$ consecutive columns in the circular
order in $A'$ form a  matrix with determinant coprime with $n$. Following our
terminology, $A'$ is standard $n$--circular. To check this, let $M(i)$
be the square submatrix  formed by $k'$ consecutive columns of $A'$
in the circular order starting with the $i$--th column.

Since the matrix $A'$ has the form
$$
A'=\left( I_{k'}\left| \begin{array}{c}
                         I_{m-k} \\
                         X \\
I_{m-k}
                   \end{array}\right.\right)$$
for some matrix $X$, then each matrix $M(i)$ for   $i=1,\ldots ,m-k$
is a circular permutation of a lower triangular matrix with all ones
in the diagonal. Moreover, if $i=m'-(m-k)+1,\ldots,m'$ then $M(i)$ is an upper triangular matrix with all ones in the diagonal. Hence $M(i)$  is unimodular for these values of
$i$.

For the remaining values of $i$, $\det M(i)$ equals, up to a sign,
the determinant of a submatrix of $B'$ formed by $m-k$ consecutive
rows. More precisely, $\det\left[
M((m-k)+t)\right]$ equals, up to a sign, the determinant of the matrix
formed by the rows $B'_{t+1}, B'_{t+2}, \ldots ,B'_{t+(m-k)}$. Since $B'$ is $n$-good, then $\gcd\left(\det M(i),n\right)=1$.

In order to complete the proof of the Lemma we must construct the
family $\X'$ of $m'=k'+m-k$ sets. Let $I_0^1\subset [1,k']$ be the
set of subscripts $i$ for which the $i$--row of $B'$ corresponds to a
row $\sigma (i)$ of the original matrix $B$ and let
$I_0^2=[k'+1,m']$. Let $I_0=I_0^1\cup I_0^2\subset [1,m']$. By setting
$$
X'_i=\left\{\begin{array}{ll}X_{\sigma (i)} & i\in I_0^1\\X_{i-m'+m}& i\in
I_0^2\\ G &\mbox{ otherwise}\end{array}\right.
$$
we get an extension $(A',0,\X')$ of
the given restricted system with
\begin{displaymath}
    \phi: \prod_{i=1}^k X'_{\sigma^{-1}(i)}\times \prod_{i=k+1}^m X'_{i+m'-m}
\rightarrow \prod_{i=1}^k X_i\times \prod_{i=k+1}^m X_i
\end{displaymath}
the identity map. This completes the proof.
\end{proof}

Observe that Lemma~\ref{lem:ext0} and Lemma~\ref{lem:ext2} can be
concatenated to obtain a single, coherent, extension. The variables
added in Lemma~\ref{lem:ext0}, that run over the whole group $G$,
will also be moving over $G$ after the second extension provided by
Lemma~\ref{lem:ext2}. We summarize the results of this section in
the following Proposition.

\begin{proposition}\label{prop:ext} Let $G$ be an Abelian group of order $n$.
Let $\{ A,b,\X\}$, where $A$ is an integer $(k\times m)$ matrix, be
a non--thin restricted system with $\gcd (d_k(A),n)=1$. 

There is an
extension $\{ A',b',\X'\}$ of $\{ A,b,\X\}$ such
that $A'$ is of the form $A'=(I_{k'}|B)$, $b'=0$ where $A'$ is a standard $n$--circular
matrix whose dimesnions depend only on the dimensions of $A$.
\end{proposition}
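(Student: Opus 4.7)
The plan is to prove Proposition~\ref{prop:ext} by concatenating, in the indicated order, the three reduction tools already established in this section: (a) the ``homogenization'' lemma that turns $\{A,b,\X\}$ into an equivalent system $\{A,0,\widetilde\X\}$ by translating $\X$ by any fixed solution, (b) Lemma~\ref{lem:ext0}, which on a non-thin homogeneous system produces an extension whose matrix has the form $(I_{k'}|B)$ with rows of $B$ of content one, and (c) Lemma~\ref{lem:ext2}, which on such a system produces an extension whose matrix is standard $n$-circular with dimensions depending only on the dimensions of $(I_{k'}|B)$. The output of (c) is the desired system $\{A',0,\X'\}$.

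The first observation I need to make rigorous is that the relation ``$\{A',b',\Y\}$ is an extension of $\{A,b,\X\}$'' is transitive. This is immediate from the definition: if $\sigma_1:I_0^{(1)}\to [1,m]$ and $\phi_i^{(1)}$ witness the first extension and $\sigma_2:I_0^{(2)}\to[1,m']$ and $\phi_i^{(2)}$ witness the second, then composing the bijections $\sigma_2$ followed by $\sigma_1$ on the common indices, and the maps $\phi_i^{(1)}\circ\phi_j^{(2)}$ on solution coordinates, produces the corresponding data for an extension of the original system. In particular, the bijection condition on the two solution sets composes, so the solution counts of the original and the final extension agree. The conditions E1 and E2 obviously chain, since $m'-k'$ is preserved by each lemma and the ``free'' indices of the intermediate system remain free at the final step (Lemma~\ref{lem:ext2}(iii) explicitly keeps $Y_j=G$ on the added coordinates, and it does not touch coordinates of $\X$ that already satisfied $X_i=G$ from Lemma~\ref{lem:ext0}(iv)).

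Given this, the construction runs as follows. I first translate by a fixed solution $y$ of $Ax=b$ (such a solution exists since the system is non-thin) to obtain the equivalent homogeneous system $\{A,0,\widetilde\X\}$ where $\widetilde X_i = X_i - y_i$. This is still non-thin, since thinness is translation-invariant. Then I apply Lemma~\ref{lem:ext0} to produce an extension $\{A'',0,\Y\}$ satisfying (i)--(iv) of that lemma; in particular, $A''=(I_{k''}|B'')$ with $\gcd(B''_i)=1$ for every row. Since solutions of this extension are in bijection with those of $\{A,0,\widetilde\X\}$, and the original system is non-thin, the extension is non-thin too (thinness of a system propagates back through the bijection to the system being extended, since the maps $\phi_i$ are surjective on solution coordinates). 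Finally, Lemma~\ref{lem:ext2} applies to $\{A'',0,\Y\}$ and produces $\{A',0,\X'\}$ with $A'$ standard $n$-circular whose dimensions depend only on the dimensions of $A''$, which in turn depend only on the dimensions of $A$.

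The main thing to watch will be step (b): I must confirm non-thinness at the moment we invoke Lemma~\ref{lem:ext0}, and I must verify that after invoking it the system remains in a state where Lemma~\ref{lem:ext2} is applicable, i.e.\ of the form $(I_{k''}|B'')$ with $\gcd(B''_i)=1$. Both are handed to us by Lemma~\ref{lem:ext0}(ii),(iii), provided non-thinness is preserved through the intermediate translation and homogenization. The only other bookkeeping point is that the dimensions $(k',m')$ of the final matrix depend only on $(k,m)$: Lemma~\ref{lem:ext0} gives $(k'',m'')=(m,2m-k)$, and Lemma~\ref{lem:ext2}(ii) then yields $(k',m')$ depending only on $(k'',m'')$, hence only on $(k,m)$, as claimed.
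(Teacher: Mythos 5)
Your proposal is correct and follows exactly the paper's route: the paper derives Proposition~\ref{prop:ext} by homogenizing via the translation lemma and then concatenating Lemma~\ref{lem:ext0} with Lemma~\ref{lem:ext2}, noting that the added coordinates keep running over all of $G$ and that extensions compose. You in fact supply more detail than the paper does (transitivity of the extension relation, preservation of non-thinness, and the dimension bookkeeping), all of which checks out.
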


\section{Proof of Theorem \ref{t.mainb}}\label{sec:proof}

We complete here the proof of Theorem \ref{t.mainb}. We  assume that
the system is not thin, otherwise, the result holds by deleting just
one element of one set.

By Lemma \ref{lem:k+2} we may assume that $m'-k'\ge 2$. Let
$\epsilon>0$ and an integer $(k\times m)$ matrix $A$ be given. Let
$G$ be an  Abelian group of order $n$ coprime with $d_k(A)$, and let
$\{ A, b,\X\}$ be a restricted system in $G$. It follows from
Proposition \ref{prop:ext} that there is an extension $\{
A',0,\X'\}$ of $\{ A, b,\X\}$ such that $A'$ is a standard $n$--circular
  matrix of dimension $(k'\times m')$ with $m'-k'=m-k$ and
$k'=k'(m,k)$. Moreover there is a subset $I_0\subset [1,m']$ with
cardinality $m$, a bijection $\sigma :I_0\rightarrow [1,m]$ and maps
 $\phi_i :X_i'\rightarrow X_{\sigma (i)}$, $1\le i\le m$ such that
the map  $\phi: \X' \rightarrow \X$ with
$(\phi(x'))_i=\phi_{\sigma^{-1}(i)}(x'_{\sigma^{-1}(i)})$ induces a
bijection between the set of solutions of  $\{A',0,\X'\}$ and the
set of solutions of  $\{A,b,\X\}$. In addition, $I=[1,m']\setminus
I_0$ has cardinality less than $k'$ and $X_i'=G$ for each $i\in I$.

We apply Lemma~\ref{lem:red} to the extension $\{A',0,\X'\}$ to
obtain a set $\bar{\X}'$ with $|\bar{X}'_i|<\epsilon n$ for all
$i\in[1,m']$ such that $\{A',0,\X'\setminus \bar{\X}'\}$ has no
solution. We use the last part of Lemma~\ref{lem:red} to ensure that
$\bar{\X}'$ can be chosen in such a way that $\bar{X}'_i=\emptyset$
for each $i\in I=[1,m']\setminus I_0$. This shows that $\{ A, b,\X
\setminus \phi(\bar{\X}')\}$ is solution free and
$|(\phi(\bar{\X}'))_i|<\epsilon n$ for $i\in[1,m]$. This completes
the proof of Theorem~\ref{t.mainb}.

\section{On the condition on the determinantal} \label{s.dtmntl_cond}

As it has been pointed out to the authors in several occasions, the main result in this paper would be neater if the condition regarding the coprimality between the $k$--determinantal $d_k(A)$ of the matrix and order of the group $n$   could be removed from the hypothesis of the statement.

For a given group $G$, the condition ensures that the system $Ax=b$ has the appropriate number $n^{m-k}$ of solutions, and in that respect it is only natural that such a condition is placed in the statement. On the other hand, if $\gcd (n,d_k(A))=d>1$, this means that, in an equivalent system, one of the equations of the system is simply multiplied by some integer different than one, which is a somewhat unnatural situation. 

The proper statement in the general case should say that, if the linear system $Ax=b$ has less than $\delta S(A,G)$ solutions with entries in sets $X_1,\ldots ,X_n$, then it can be made solution--free by removing at most $\epsilon n$ elements in each set, where $S(A,G)$ denotes the total number of solutions of the system (which is larger than $n^{m-k}$). 

As it happens, such a statement does hold, but to the cost of making  $\delta$ depend on the particular entries of the matrix $A$ and not only on $\epsilon$ and the dimensions of $A$, thus modifying the nature of the statement of Theorem \ref{t.mainb}. On the other hand, proving such a result requires the development of a new statement of the removal lemma which has its own technical difficulties. Let us try to explain the reason for this.  

It can be shown that, if $d=\gcd (d_k(A),n)>1$ and the system $Ax=b$ does have solutions, then there are integers $\overline{d}_1|\ldots|\overline{d}_k$ with $\prod_{i=1}^k \overline{d}_i=d$, a matrix $A'$ and vectors $b_1,\ldots ,b_t$, where $t=\prod_{i=1}^k n/|\overline{d}_i\cdot G|$ such that the  set of solutions of $Ax=b$ is the union of the sets of solutions of the $t$ linear systems
$$
A'x=b_1, A'x=b_2,\ldots , A' x=b_t,
$$
and $\gcd(d_k(A'), n)=1$.  This decomposition can be combined with Theorem~\ref{t.mainb} to obtain an analogous statement without the condition $\gcd (d_k (A),|G|)=1$ for the family of cyclic groups $\z_n$, for example. 

However the strategy of  simply combining Theorem \ref{t.mainb} with the above decomposition  is far from sufficient to solve the problem for the general class of abelian groups. 
Combining all the sets solutions in a suitable form requires a new formulation of the removal lemma for product structures which, having an interest in its own, involves technical difficulties which are detailed in a forthcoming paper of one of the authors \cite{Vena12}. The latter   builds on the construction presented in this paper, which has indeed an interest in its own as being the natural generalization of  the version for finite fields, in which the  $\delta$ does not depend on the actual entries of the matrix $A$ but only on its dimensions.

\section*{Acknowledgements}

We would like to thank Andrew Thomason for his remarks on this paper
and an anonymous referee for useful comments and observations.

\bibliographystyle{abbrv}
\bibliography{Bib-mat-1}

\end{document}